\theoremstyle{plain}
\newtheorem{theorem}{Theorem}
\newtheorem*{lemma*}{Lemma}
\newtheorem{lemma}[theorem]{Lemma}
\newtheorem*{theorem*}{Theorem}
\newtheorem*{proposition*}{Proposition}
\newtheorem{corollary}[theorem]{Corollary}
\newtheorem*{corollary*}{Corollary}
\theoremstyle{definition}
\newtheorem{remark}[theorem]{Remark}
\newtheorem*{remark*}{Remark}
\newtheorem*{definition*}{Definition}
\newtheorem*{example*}{Example}
\newtheorem*{question*}{Question}
\newcommand{\cO}{\mathcal O}
\numberwithin{equation}{section}
\def\RR{{\mathbb R}}
\def\c1{\operatorname{c_1}}
\def\c2{\operatorname{c_2}}
\def\ZZ{{\mathbb Z}}
\def\QQ{{\mathbb Q}}
\def\PP{{\mathbb P}}
\def\O{{\mathcal O}}
\def\+{\oplus}                   
\def\*{\otimes}                  
\def\rat{\dashrightarrow}       
\def\Cr{\operatorname{Cr}}
\def\NEb{\overline{\operatorname{NE}}}
\def\Pic{\operatorname{Pic}}
\renewcommand\setminus{-}
\title{Curves disjoint from a nef divisor}
\author[J. Lesieutre]{John Lesieutre}
\address{Institute for Advanced Study, Einstein Drive, Princeton, New Jersey 08540 USA}
\email{johnl@math.ias.edu}
\author[J.C. Ottem]{John Christian Ottem}
\address{DPMMS, University of Cambridge, Wilberforce Road, Cambridge CB3 0WB, UK}
\email{J.C.Ottem@dpmms.cam.ac.uk}
\begin{document}

\begin{abstract}
On a  projective surface it is well-known that the set of curves orthogonal to a nef line bundle is either finite or uncountable. We show that this dichotomy fails in higher dimension by constructing a nef line bundle on a threefold which is trivial on countably infinitely many curves. This answers a question of Totaro. As a pleasant corollary, we exhibit a quasi-projective variety with only a countably infinite set of complete, positive-dimensional subvarieties.

\end{abstract}

\maketitle

\thispagestyle{empty}

\section{Introduction}
If $L$ is a nef line bundle on a smooth complex projective surface, then the set of curves $C$ such that $L\cdot C=0$ is either finite or uncountable (when some such $C$ moves in a positive-dimensional family). This follows essentially from the Hodge index theorem. In \cite{totaro}, Totaro asked whether this remains true in higher dimensions:

\begin{question*}
\label{totaroq}
Is there a nef line bundle $L$ on a normal complex projective variety $X$ such that the set
of curves $C$ with $L\cdot C = 0$ is countably infinite?
\end{question*}

In this note we construct examples of such \(L\) in dimensions greater than two. Perhaps the surprising thing  is not that such examples exist, but that they turn out to be so accessible: in fact, our example is the blow-up of $\PP^3$  at eight very general points and $L$ is the anticanonical divisor. 
Our main result is the following:

\begin{theorem}\label{nefexample}
There exists a smooth projective rational threefold $X$ with nef anticanonical divisor so that the set of curves $C$ with $-K_X\cdot C=0$ is countably infinite.
\end{theorem}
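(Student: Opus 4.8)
The plan is to realize $X$ as the blow-up of $\mathbb{P}^3$ at eight very general points $p_1,\dots,p_8$, so that $-K_X = 4H - 2\sum E_i$, and to analyze the curves on which this divisor vanishes by exploiting the large group of pseudo-automorphisms (Cremona-type symmetries) acting on $X$. The first step is to show $-K_X$ is nef: since the eight points are very general, one computes $(-K_X)^3 = 64 - 8\cdot 8 = 0$, and one checks that $-K_X$ is a limit of effective divisors (e.g. coming from quadric surfaces through subsets of the points, or by a direct intersection-theoretic argument) with no curve in its stable base locus forcing negativity; the key input is that very general points impose independent conditions, so no unexpected low-degree curve or surface appears. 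One should identify the "obvious" curves $C$ with $-K_X\cdot C = 0$: the $56$ lines through pairs of points, $28$ conics through $6$ of the points, the twisted cubic through all eight (wait — that has degree $3$, giving $12 - 16 < 0$), so more carefully the lines $\ell_{ij}$ joining $p_i,p_j$ have strict transform meeting $-K_X$ in $4 - 2 - 2 = 0$. These are the seeds.

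Next, I would introduce the group $G$ generated by the standard Cremona involutions based at subsets of the eight points (or the Weyl group $W(E_8)$-type symmetries of the Picard lattice). Blowing up $8$ general points of $\mathbb{P}^3$ yields a variety whose pseudo-automorphism group is infinite — this is the crucial structural fact, parallel to the classical picture for cubic surfaces (blow-up of $\mathbb{P}^2$ at $8$ points) but now in dimension three. The orbit of one of the $56$ lines $\ell_{ij}$ under $G$ is then an infinite set of curves, each necessarily still orthogonal to $-K_X$ because $-K_X$ is canonical and hence preserved by every pseudo-automorphism. So the set of curves with $-K_X\cdot C = 0$ is at least countably infinite. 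The main content is then the reverse inclusion: that it is at most countable, i.e. no such curve moves in a positive-dimensional family. This is where "very general" must do real work: if some curve class with $-K_X\cdot C = 0$ moved, one would get a covering family of such curves, and I would derive a contradiction with the numerical/Hodge-theoretic positivity of $-K_X$ restricted to an appropriate surface, or with the fact that a very general such $X$ has no nonconstant maps from positive-dimensional families respecting the constraints (a countability-of-Hilbert-scheme-components plus specialization argument).

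The hard part, I expect, is precisely proving that the orthogonal curves do not move — equivalently, that $-K_X$, though not ample, is "almost ample" in the sense that its null locus is a countable union of rigid curves. The strategy I would pursue: show that $-K_X$ is nef and big would be false (it has volume zero), so instead show $-K_X$ lies on the boundary of the nef cone at a point where the dual face is spanned by the classes of these rigid curves, and use the infinite Weyl-group action on $N_1(X)$ to see this face is not polyhedral — it is a "round" piece of the cone whose rational boundary points are exactly the (countably many) curve classes in the $G$-orbit. Concretely I would (i) describe $\overline{\operatorname{NE}}(X)$ or at least the relevant face via the $E_8$-lattice structure, (ii) check each extremal ray in the orbit is generated by an actual irreducible curve (the transform of $\ell_{ij}$) and that these are the only effective classes with $-K_X\cdot(\cdot)=0$, and (iii) rule out positive-dimensional families by noting such a family would sweep out a surface $S$ with $(-K_X)|_S$ nef of self-intersection zero but with infinitely many orthogonal curves on $S$ — forcing $S$ to be covered by $-K_X$-trivial curves in a way incompatible with the Hodge index theorem on $S$ once one knows (from very generality) that $S$ cannot be, say, a quadric or a fibered surface of the required type. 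Verifying step (iii) cleanly, and in particular pinning down which surfaces $S\subset X$ could even occur, is the technical crux.
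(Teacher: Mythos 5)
Your choice of $X$ (blow-up of $\PP^3$ at eight very general points, $L=-K_X$) matches the paper, and your lower-bound idea (orbit of a line $\ell_{ij}$ under Cremona transformations, which preserve the canonical class) is in the right spirit. But there are genuine gaps. First, nefness: being a limit of effective divisors gives pseudoeffectivity, not nefness. The paper's argument hinges on a structure you never identify: the pencil of quadrics through the eight points. Its base curve $B'$ (a genus-one curve) is exactly the base locus of $|-K_X|$, and $-K_X\cdot B'=(-K_X)^3=0$, which is what gives nefness. Second, and more seriously, your ``at most countable'' step is the real content and your sketch does not close it. A surface swept by curves orthogonal to a nef divisor of self-intersection zero is \emph{not} in itself incompatible with the Hodge index theorem (think of fibers of an elliptic fibration and the fiber class), so step (iii) as stated has no contradiction to reach. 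The paper instead uses the fibration $f:X\dashrightarrow\PP^1$ defined by the quadric pencil, whose fibers lie in $|-\tfrac12 K_X|$: any $L$-trivial curve is either $B'$ or lies in a fiber $S$; on $S$ a $K_S$-trivial curve is a $(-2)$-curve or a multiple of $-K_S$; and the decisive input from ``very general'' is arithmetic, not Hodge-theoretic --- the restriction $\Pic(Q)\to\Pic(B)$ is injective (no relations in $\Pic^0(B)$ involving the eight points), which both prevents $-K_S$ from moving and shows only countably many fibers can contain $(-2)$-curves. None of this mechanism appears in your outline, and invoking very generality as a black box does not substitute for it.

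On the lower bound, your mechanism is also not quite right: for very general points the Cremona transformation based at four of them \emph{changes} the configuration, so it does not give a pseudo-automorphism of $X$ itself, and the claim that $X$ has an infinite pseudo-automorphism group is unjustified (and not what the paper uses). The paper works with a sequence of birational maps between blow-ups $X_{\mathbf p_n}\dashrightarrow X_{\mathbf p_{n-1}}$ at \emph{different} configurations, takes strict transforms of lines through two of the points, checks (citing Laface--Ugaglia) that these avoid the indeterminacy loci so that they really are irreducible curves with the predicted classes and normal bundle $\O(-1)\oplus\O(-1)$, and proves the curves are pairwise distinct via the growth of degrees coming from a $3\times 3$ Jordan block of the Coxeter-element matrix. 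These verifications are needed; ``the orbit of $\ell_{ij}$ is infinite'' does not come for free. (Minor slips: there are $\binom{8}{2}=28$ lines through pairs of points, not $56$, and conics through six general points of $\PP^3$ do not exist --- the relevant extra examples are twisted cubics through six points.)
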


In particular, since $-K_X$ is effective in the example, the complement of the zero set of a global section gives an example of the following:
\begin{corollary}\label{weirdquasiproj}
There exists a quasi-projective variety with only a countably infinite set of complete, positive-dimensional subvarieties.
\end{corollary}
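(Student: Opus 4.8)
The plan is to take $X$ as in Theorem~\ref{nefexample} and set $U:=X\setminus D$, where $D\subset X$ is the zero divisor of a nonzero section of $-K_X$ (such a section exists since $-K_X$ is effective). Thus $D$ is an effective anticanonical divisor and $U$ is an irreducible quasi-projective variety, since $X$ is irreducible and $D\neq X$. I claim that the complete positive-dimensional subvarieties of $U$ are precisely the curves $C\subset X$ with $-K_X\cdot C=0$ that are disjoint from $D$, and that there are countably infinitely many of these.

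The first step is to see that any complete positive-dimensional subvariety $Z\subseteq U$ is a curve. Being complete and quasi-projective, $Z$ is projective. Since $Z$ is disjoint from $D$, every irreducible curve $C\subseteq Z$ satisfies $-K_X\cdot C=D\cdot C=0$, so by Theorem~\ref{nefexample} there are only countably many possibilities for $C$ in $X$. On the other hand, if $\dim Z\geq 2$ then a general linear section of $Z$ (in a projective embedding) is a projective surface $S\subseteq Z$, and any projective surface contains an uncountable family of distinct irreducible curves — for instance its general hyperplane sections. All of these lie in $Z\subseteq U$, hence are $-K_X$-trivial curves of $X$, contradicting the countability just noted. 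So $\dim Z=1$, and $Z$ is one of the countably many curves $C$ with $-K_X\cdot C=0$. In particular the set of complete positive-dimensional subvarieties of $U$ is countable.

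It remains to show this set is infinite, i.e.\ that infinitely many of the curves $C$ with $-K_X\cdot C=0$ lie in $U$. Such a curve lies in $U$ as soon as $C\not\subseteq\Supp D$: in that case $D\cap C$ is a finite scheme of degree $D\cdot C=-K_X\cdot C=0$, hence empty. Write $\Supp D=S_1\cup\cdots\cup S_k$ with each $S_i$ an irreducible surface. For a fixed $i$, I claim only finitely many $-K_X$-trivial curves of $X$ can lie on $S_i$: otherwise, choosing a resolution $\pi\colon\widetilde S_i\to S_i$, the line bundle $\pi^{*}(-K_X|_{S_i})$ is nef on the smooth projective surface $\widetilde S_i$ and trivial on infinitely many curves, hence — by the dichotomy recalled in the introduction — on uncountably many; all but finitely many of these are strict transforms of honest irreducible curves of $S_i\subseteq X$, which are then $-K_X$-trivial by the projection formula, contradicting Theorem~\ref{nefexample} once more. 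Summing over $i$, at most finitely many $-K_X$-trivial curves of $X$ meet $\Supp D$, so infinitely many avoid $D$ and hence lie in $U$. Together with the previous paragraph, this exhibits the complete positive-dimensional subvarieties of $U$ as a countably infinite set.

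The only step that is not essentially formal is this last one: a priori the effective anticanonical divisor $D$ could contain all but finitely many of the special curves, and it is precisely the surface-level dichotomy from the introduction, applied componentwise to $\Supp D$, that rules this out.
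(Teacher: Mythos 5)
Your proof is correct, and it reaches the conclusion by a route that differs from the paper's in the one step that requires an idea. The paper does not take an arbitrary effective anticanonical divisor: it fixes a \emph{very general} smooth member $S$ of the pencil $\lvert-\tfrac12K_X\rvert$ and sets $U=X\setminus S$. With that choice the "infinitely many curves avoid the divisor" step is immediate from the structure already established in Lemma~\ref{atmostcountable}: every $-K_X$-trivial curve other than $B'$ lies in one of countably many specific fibers of $f$, so a very general fiber contains none of them, and then $S\cdot C=0$ forces disjointness. You instead allow an arbitrary $D\in\lvert-K_X\rvert$ and must rule out the possibility that $\Supp D$ swallows all but finitely many of the special curves; you do this by pulling $-K_X$ back to a resolution of each component $S_i$ and invoking the surface-level finite-or-uncountable dichotomy, which would otherwise produce uncountably many $-K_X$-trivial curves in $X$, contradicting Lemma~\ref{atmostcountable}. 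Your mechanism is more robust (it uses nothing about the fibration $f$ or genericity of the chosen divisor, only nefness of $-K_X$, countability of the trivial curves, and the Hodge-index dichotomy), at the cost of an extra componentwise argument; the paper's choice of a very general half-anticanonical member makes the same point in one line. You also spell out the reduction from higher-dimensional complete subvarieties to curves (a complete surface would contain uncountably many curves), which the paper leaves implicit; that is a welcome addition. Minor remarks: distinct non-exceptional curves on the resolution have distinct images, which is what makes your "uncountably many downstairs" count valid (worth saying explicitly), and your conclusion "at most finitely many trivial curves meet $\Supp D$" should read "are contained in $\Supp D$" --- the curves not contained in it meet $D$ in the empty set precisely because $D\cdot C=0$, as you argue.
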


Furthermore, we show that the question has an affirmative answer even
if the line bundle is required to be big and nef, which is impossible
in dimension less than four (cf. Remark \ref{3fold}). 

\begin{corollary}\label{nullexample}
  There exists a smooth projective fourfold \(Y\) and a big and nef
  line bundle \(M\) on \(Y\) so that the set of curves \(C\) with \(M
  \cdot C=0\) is countably infinite. 
\end{corollary}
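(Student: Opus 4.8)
The plan is to promote the threefold $X$ of Theorem~\ref{nefexample} to a fourfold $Y$ by passing to a suitable $\PP^1$-bundle, arranged so that one section of the bundle gets contracted to a point while the anticanonical class of $X$ governs the null curves sitting inside that section. Concretely, I would fix a very ample divisor $A$ on $X$, set $\E = \O_X \oplus \O_X(A)$, and take $\pi \colon Y = \PP_X(\E) \to X$ with tautological class $\xi = c_1(\O_Y(1))$ normalized so that $\pi_*\O_Y(1) = \E$; then $Y$ is a smooth projective (indeed rational) fourfold.

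The first step is to check that $\xi$ is big and nef. Nefness is immediate because $\E$ is globally generated; for bigness, the Grothendieck relation $\xi^2 = \pi^*c_1(\E)\cdot\xi - \pi^*c_2(\E) = \pi^*A\cdot\xi$ gives $\xi^3 = \pi^*(A^2)\cdot\xi$ and $\xi^4 = \pi^*(A^3)\cdot\xi = A^3 > 0$. The second step is to single out the two sections of $\pi$: the quotient $\E \twoheadrightarrow \O_X(A)$ yields a section on which $\O_Y(1)$ restricts to $\O_X(A)$, while $\E \twoheadrightarrow \O_X$ yields a section $S \cong X$ with $\O_Y(1)|_S \cong \O_X$ (so $\xi|_S = 0$) and $N_{S/Y} \cong \O_X(-A)$.

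The heart of the argument is the claim that $\xi\cdot C = 0$ holds for an irreducible curve $C \subset Y$ precisely when $C \subseteq S$. Since $\xi$ is semiample (again as $\E$ is globally generated) and big, the associated morphism $\phi \colon Y \to \bar Y$ is birational; it is constant on $S$ because $\xi|_S = 0$, so it collapses $S$ to a point, and I would argue that $S$ is its only positive-dimensional fibre. This is nothing but the relative contraction of the negative section $N_{S/Y} = \O_X(-A)$ of a $\PP^1$-bundle, with $\bar Y$ obtained from $Y$ by blowing $S$ down to a point; then $\xi\cdot C = 0$ iff $\phi(C)$ is a point iff $C \subseteq S$. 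I expect this step — verifying that no curve disjoint from $S$ is $\xi$-trivial — to be the only one requiring genuine care.

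To finish, set $M = \xi + \pi^*(-K_X)$. It is nef as a sum of nef classes, and big because $M$ is nef and $M^4 \geq \xi^4 > 0$ (the mixed intersection numbers of nef classes being nonnegative). For an irreducible curve $C \subset Y$, the equality $M\cdot C = 0$ forces both $\xi\cdot C = 0$ and $\pi^*(-K_X)\cdot C = 0$, the two summands being nef; by the previous paragraph $C \subseteq S$, and since $\pi|_S \colon S \xrightarrow{\ \sim\ } X$ identifies $\pi^*(-K_X)|_S$ with $-K_X$, the second condition reads exactly $-K_X\cdot\pi_*C = 0$ on $X$, while conversely every such curve in $S$ is $M$-trivial. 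Hence $C \mapsto \pi_*C$ is a bijection from $\{\,C : M\cdot C = 0\,\}$ onto $\{\,C' \subset X : -K_X\cdot C' = 0\,\}$, which is countably infinite by Theorem~\ref{nefexample}; and $\dim Y = 4$ is optimal by Remark~\ref{3fold}. This would give the corollary.
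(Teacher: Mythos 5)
Your proposal is correct and is essentially the paper's own construction: the same fourfold $Y=\PP(\O_X\oplus\O_X(H))$ and the same class $M$, since your tautological class $\xi$ is exactly the pullback under the cone contraction $q:Y\to CX$ of the hyperplane class on the cone, so $M=\xi+\pi^*(-K_X)$ coincides with the paper's $M=q^*(G)+p^*(L)$ for a particular choice of $G$. Your direct verifications (bigness of $\xi$ via the Grothendieck relation, and that $\xi$ is trivial precisely on curves in the negative section) are just the hands-on version of the paper's observation that $q$ is birational with exceptional locus the section $E\cong X$, after which both arguments conclude identically by restricting to that section and invoking Theorem~\ref{nefexample}.
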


Note that when the line bundle $L$ is semiample, these sorts of pathologies do
not occur. In that case, some multiple of $L$ defines a morphism to
projective space $X\to \PP^N$ which contracts exactly the curves
orthogonal to $L$, so this locus is Zariski closed. In particular, if
\(X\) is a Mori dream space, every nef line bundle is semiample and so
is zero on an either finite or uncountable set of curves. This
includes all examples in which \(X\) is the blow-up of \(\PP^3\) at $r\le
7$ points.

\section{The rational threefold}

Let $p_1,\ldots,p_8$ be eight very general points in $\PP^3$. The linear system of quadric surfaces containing the points is a one-dimensional pencil. Let $Q_0,Q_1$ be two distinct smooth quadric surfaces in this pencil. The base-locus of the pencil, $B=Q_0\cap Q_1$, is a smooth genus \(1\) curve, which is a bidegree $(2,2)$ divisor on the quadrics.

Define $\pi:X\to \PP^3$ to be the blow-up of $\PP^3$ at the points
$p_1,\ldots,p_8$ and let $B'$ be the strict transform of $B$ on
\(X\). The pencil of quadrics determines a rational map
$f:X\dashrightarrow \PP^1$, defined outside the curve $B'$. The fibers
of this map are blow-ups of quadric surfaces in the 8 points
$p_1,\ldots,p_8$.

Write $H=\pi^\ast \O_{\PP^3}(1)$ and let $E_1,\ldots,E_8$ be the exceptional divisors of $\pi$.  Similarly, let \(h = H^2\) be the class of the strict transform of a general line in $\PP^3$, and let  $e_1,\ldots,e_8$ denote classes of lines in  $E_1,\ldots, E_8$ (which are projective planes). We have $N^1(X)=\ZZ H \oplus \ZZ E_1\oplus \cdots \oplus \ZZ E_8$ and $N_1(X)=\ZZ h\oplus \ZZ e_1\oplus \cdots \oplus \ZZ e_8$.

Let $L=-K_X$ be the anticanonical divisor of $X$: this is the nef divisor we are looking for. In terms of this basis, the canonical divisor is given by $-4H+2E_1+\ldots+2E_8$. Note that $L$ is nef, since its base-locus is exactly the curve $B'$ and $L \cdot B' = -K_X\cdot B'=(-K_X)^3=0$. Note also that the fibers of $f$ correspond to divisors in the linear system $\lvert-\frac12K_X\rvert$.

There are many curves $C$ such that $L\cdot C=0$. For example, let \(C\) be the strict transform of the line $l$ through the points $p_1$ and $p_2$. The class of \(C\) is \(h-e_1-e_2\), and we have $$L\cdot C=(4H-2E_1-\ldots-2E_8)\cdot (h-e_1-e_2)=0.$$ Let $Q$ be the quadric surface in the pencil containing $l$ as one of its rulings. Then the strict transform $S$ of $Q$ is the blow-up of $Q$ in $p_1,\ldots,p_8$, and the curve $C$ is a $(-2)$-curve on $S$ with class $\pi^*\O(0,1)-E_1-E_2$ in $\Pic(S)$.

The same thing happens if we take $C$ to be the strict transform of a twisted cubic curve in $\PP^3$ through six of the points $p_1,\ldots,p_6$; the class of $C$ is $3h-e_1-\ldots-e_6$, and \(L \cdot C = 0\).  There is a unique quadric surface $Q$ in the pencil containing the twisted cubic, and its strict transform $S$ contains $C$ as a $(-2)$-curve.

In fact, we will show below that there are countably infinitely many curves $C$ on $X$ such that $L\cdot C=0$: these will be constructed as the strict transforms of the lines in $\PP^3$ through a pair of points and under sequences of Cremona transformations on $\PP^3$ based at quadruples of points. These are all rigid rational curves with normal bundle isomorphic to $\O(-1)\oplus \O(-1)$. Moreover, the set of these curves is Zariski dense in $X$.

\begin{lemma}
\label{minus2curves}
Let $S$ be a smooth rational surface with $-K_S$ nef and $K_S^2=0$. If $C$ is  an irreducible curve such that $K_S\cdot C=0$, then either $C\in \lvert-mK_S\rvert$ for some $m\ge 1$ or $C$ is a smooth rational curve of self-intersection $-2$.
\end{lemma}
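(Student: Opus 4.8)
The strategy is to combine the adjunction formula with the Hodge index theorem. I would begin with a preliminary observation that pins down $-K_S$: since $K_S^2=0$, the surface $S$ is not a minimal surface (the minimal rational surfaces are $\PP^2$ and the Hirzebruch surfaces $\mathbb{F}_n$, whose canonical classes square to $9$ and $8$ respectively), so $S$ contains a $(-1)$-curve $E$. Adjunction on $E$ gives $-K_S\cdot E=1$; in particular $-K_S$ is not numerically trivial, and it is a primitive class in $\Pic(S)$ (if $-K_S=kD'$ with $D'$ integral and $k\geq 1$, then $k$ divides $-K_S\cdot E=1$). This also forces the Picard number $\rho$ of $S$ to be at least $2$.

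Now apply adjunction to $C$: since $K_S\cdot C=0$ we get $C^2=2p_a(C)-2$, so $C^2$ is an even integer with $C^2\geq -2$. I would first exclude the possibility $C^2>0$: in that case the Hodge index theorem makes the orthogonal complement $C^\perp\subseteq N^1(S)_\RR$ negative definite, so the relations $K_S\cdot C=0$ and $K_S^2=0$ would give $K_S\equiv 0$, contradicting the preliminary observation. Hence $C^2\in\{-2,0\}$. If $C^2=-2$, then $p_a(C)=0$, and an integral projective curve of arithmetic genus zero is a smooth rational curve; thus $C$ is a smooth rational curve of self-intersection $-2$, which is the second alternative.

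It remains to treat $C^2=0$, the heart of the argument. Here $C$ and $K_S$ both have square zero and are orthogonal, so they span a totally isotropic subspace of $N^1(S)_\RR$; since the intersection form has signature $(1,\rho-1)$ with $\rho\geq 2$, such a subspace is at most one-dimensional, and as $K_S\neq 0$ we conclude $C\equiv \lambda K_S$ for some $\lambda\in\QQ$. Intersecting with an ample class and using that $C$ is effective while $-K_S$ is nef and nonzero forces $\lambda<0$; write $\lambda=-m$ with $m\in\QQ_{>0}$. Because $-K_S$ is primitive in $\Pic(S)$ and $C$ is an integral class, $m$ must in fact be a positive integer. Finally, on a rational surface numerical equivalence coincides with linear equivalence, so $C\equiv -mK_S$ upgrades to $C\in\lvert -mK_S\rvert$, completing the proof.

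The one delicate point — and the part I expect to require the most care — is the last step of the $C^2=0$ case: passing from the numerical proportionality $C\equiv -mK_S$ to genuine membership $C\in\lvert-mK_S\rvert$ with $m$ a positive \emph{integer} rather than merely a positive rational. This is exactly where the primitivity of $-K_S$ enters, which in turn rests on the existence of a $(-1)$-curve forced by the hypothesis $K_S^2=0$.
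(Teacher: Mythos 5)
Your proof is correct and follows essentially the same route as the paper: the Hodge index theorem bounds $C^2$, adjunction settles the case $C^2=-2$, and when $C^2=0$ the signature of the intersection form forces $C$ to be proportional to $K_S$, with primitivity of $-K_S$ yielding an integer multiple. The only differences are cosmetic: you phrase the $C^2=0$ step as a totally isotropic subspace argument rather than showing $C^\perp=K_S^\perp$, and you supply the justification (via a $(-1)$-curve) for the non-divisibility of $-K_S$ in $\Pic(S)$, which the paper asserts without proof.
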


\begin{proof}
The Hodge index theorem implies that $C^2\le 0$. If $C^2<0$, then $C\cdot K_S=0$ implies $C^2=-2$ and $p_a(C)=0$ by the adjunction formula, and hence $C\simeq \PP^1$. So suppose $C^2=0$. For any $C'\in K_S^\perp$, either $C'\cdot C=0$, or we have $(tC+C')^2>0$ for some $t>0$. In the latter case we again obtain a contradiction using the Hodge index theorem. Thus we have that $C^\perp=K_S^\perp$ and hence $C=-m K_S$ for some $m\ge 1$, since $-K_S$ is not divisible in $\Pic(S)$.
\end{proof}

So far we have not used the fact that the points $p_1,\ldots,p_8$ are {\em very} general on $B$. For us, the crucial fact is that in this case there are no relations in $\Pic(B)$ between line bundles in the very general quadric in the pencil and the points $p_1,\ldots,p_8$. 

One way of seeing this is the following: Fix a smooth quadric surface $Q$ in the pencil and let $M$ be a line bundle on $Q$. For each set of integers $a_1,\ldots,a_8$ such that $M'=M(-a_1p_1-\ldots-a_8p_8)|_B$ has degree 0 on $B$, there is a Zariski closed subset of points $(p_1,\ldots,p_8)\in B^8$ such that the $M'$ is effective  on $B$ (that is, $M'|_B=\O_B$). Now we can assume that the points $p_1,\ldots,p_8$ are chosen outside the countable union of all these closed subsets running through all the choices $M,a_1,\ldots,a_8$. Then no non-trivial line bundle on $Q$ restricts to the trivial bundle on $B$. We have essentially also shown the following

\begin{lemma}\label{Pic-injective}
Let $B\subset \PP^3$ be a smooth quartic curve. Then for a very general $Q$ in the pencil $|I_B(2)|$, the restriction map $\Pic(Q)\to \Pic(B)$ is injective.
\end{lemma}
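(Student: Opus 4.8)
The plan is to realize $Q$ as $\PP^1 \times \PP^1$ and use the explicit description of its Picard group. Since $Q$ is a smooth quadric, $\Pic(Q) \cong \ZZ \oplus \ZZ$, generated by the two rulings $\O_Q(1,0)$ and $\O_Q(0,1)$, and $B \subset Q$ is a curve of bidegree $(2,2)$ — a smooth genus-one curve by adjunction. So the restriction map $\Pic(Q) \to \Pic(B)$ is the map sending $(a,b)$ to the line bundle $\O_Q(a,b)|_B$, whose degree on $B$ is $2a + 2b$. To prove injectivity it suffices to show that the kernel is trivial; since any element of the kernel has degree zero on $B$, it must be of the form $\O_Q(n,-n)$ for some $n \in \ZZ$, and we must rule out $\O_Q(n,-n)|_B \cong \O_B$ for $n \neq 0$.

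**The key mechanism: deformation of the pencil.** First I would observe that it suffices to exhibit a single smooth quadric $Q$ in $|I_B(2)|$ for which the restriction is injective — actually, more precisely, I would turn this into a statement that the non-injective locus is small. Here is the point: as $B$ varies over smooth quartic curves in $\PP^3$ and $Q$ varies in the pencil $|I_B(2)|$, the bidegree-$(2,2)$ embedding $B \hookrightarrow Q \cong \PP^1\times\PP^1$ varies, and for a \emph{very general} such embedding the two ``difference'' classes $\O_Q(1,0)|_B$ and $\O_Q(0,1)|_B$ are linearly independent in $\Pic(B)$ modulo torsion together with being non-torsion — equivalently, the class $\xi := \O_Q(1,-1)|_B \in \Pic^0(B)$ is non-torsion and moreover not equal to $\O_B$. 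One way to see this: $\Pic^0(B)$ is an elliptic curve, and the locus of configurations where $\xi = \O_B$ is a proper closed condition, so we just need one example where $\xi \neq \O_B$ to conclude it holds very generally; but then one promotes ``$\xi$ is non-torsion'' by noting that if $n\xi = 0$ then in particular $\xi$ lies in the $n$-torsion, a finite set, and the union over all $n$ is countable, so avoiding all of them is a very general condition. The cleanest exemplar: take a specific smooth quartic elliptic curve $B$ and a quadric $Q$ through it such that $\O_Q(1,0)|_B - \O_Q(0,1)|_B$ is manifestly non-torsion — for instance, one can write $B$ with a chosen origin as a plane cubic's Jacobian and compute the two rulings cut out divisors differing by a non-torsion point.

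**Assembling the argument.** Concretely, the steps are: (1) identify $Q \cong \PP^1 \times \PP^1$ and $\Pic(Q) = \ZZ\O_Q(1,0) \oplus \ZZ\O_Q(0,1)$; (2) compute that $\O_Q(a,b)|_B$ has degree $2a+2b$ on $B$, so the kernel of restriction lies in $\ZZ \cdot \O_Q(1,-1)$; (3) show that for very general $Q$ in the pencil (equivalently, as remarked in the paragraph preceding the lemma, after the points or the curve $B$ are chosen very generally, the class $\O_Q(1,-1)|_B$ avoids every torsion point of $\Pic^0(B)$, this being a countable union of conditions); (4) conclude $\O_Q(n,-n)|_B \not\cong \O_B$ for all $n\neq 0$, hence the restriction map is injective. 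I expect step (3) — producing a convincing argument that $\O_Q(1,-1)|_B$ is non-torsion for the very general member of the pencil — to be the main obstacle, since it requires either an explicit computation on one well-chosen pair $(B,Q)$ or a monodromy/variational argument showing the period of $\O_Q(1,-1)|_B$ is non-constant as $Q$ moves in the pencil. A variational argument seems cleanest: as $Q$ degenerates within the pencil (or as one varies in a larger family of quartic curves $B$), the point $\O_Q(1,-1)|_B \in \Pic^0(B)$ moves in the elliptic curve $\Pic^0(B)$; if it were torsion for every $Q$ it would be \emph{constant} (torsion points are isolated), and one then derives a contradiction by exhibiting two members of the family where these classes visibly differ, for example a member where the two rulings of $Q$ both pass through a common point of $B$ versus one where they do not.
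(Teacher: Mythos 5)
Your setup is fine and is in the same spirit as the paper's argument (which also proceeds by avoiding a countable union of closed ``relation'' loci): on a smooth quadric $Q$ of the pencil, $\Pic(Q)=\ZZ^2$, $B$ has bidegree $(2,2)$, so any kernel element of $\Pic(Q)\to\Pic(B)$ has degree $2a+2b=0$ on $B$ and is a multiple of $\O_Q(1,-1)$; the lemma is thus equivalent to saying that $\xi_Q:=\O_Q(1,-1)\vert_B\in\Pic^0(B)$ is non-torsion for very general $Q$ in the pencil. The genuine gap is exactly the step you yourself flag as the main obstacle: you never prove that, for each fixed $n\neq 0$, the closed locus $\{Q : n\xi_Q=\O_B\}$ is a \emph{proper} subset of the pencil (equivalently, that $Q\mapsto\xi_Q$ is non-constant on the pencil). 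That statement is the actual content of the lemma, and neither of your two suggested routes is carried out. Moreover, the justifications you sketch are shaky: the parenthetical ``torsion points are isolated'' is false (torsion points are dense in $\Pic^0(B)$; the correct argument is that the set of all torsion points is countable while a non-constant morphism from the pencil has uncountable image, or one fixes the order $n$ and uses closedness), and the proposed pair of ``visibly different'' members --- a quadric where ``the two rulings pass through a common point of $B$'' versus one where they do not --- is not a meaningful dichotomy, since through every point of $Q$ there is a line of each ruling. Finally, note that the lemma fixes an \emph{arbitrary} smooth quartic $B$ and lets only $Q$ vary; your parenthetical fallback of varying $B$ in a larger family would only prove the statement for very general $B$, which is weaker than what is asserted (though enough for the paper's application, where $B$ comes from very general points).

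The missing step can be closed by a classical observation that also explains why non-constancy holds within each fixed pencil. A smooth quadric $Q\supset B$ is recovered from the degree-$2$ class $L:=\O_Q(1,0)\vert_B$: the lines of that ruling are exactly the secant lines spanned by the divisors of the pencil $|L|$, so $Q$ is the union of those secants. Equivalently, smooth members of $|I_B(2)|$ correspond to decompositions $\O_B(1)\cong L\otimes L'$ with $L\neq L'$ in $\Pic^2(B)$, the pencil being the quotient of $\Pic^2(B)$ by the involution $L\mapsto \O_B(1)\otimes L^{-1}$ (the four branch points are the quadric cones). Hence $Q\mapsto \xi_Q=L-L'$ has finite fibers, so it is non-constant, each locus $\{n\xi_Q=\O_B\}$ is finite, and a very general $Q$ avoids the countable union --- which is what your step (3) needs. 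With this inserted, your proof is correct and is essentially a more explicit version of the paper's own (very terse) argument, phrased with $B$ fixed and $Q$ moving in the pencil rather than with $Q$ fixed and the eight points moving on $B$.
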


We are now ready to prove the main result of this section:

\begin{lemma}
\label{atmostcountable}
  The set of curves \(C \subset X\) for which \(L \cdot C = 0\) is at
  most countably infinite.
\end{lemma}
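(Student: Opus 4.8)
The plan is to push every such curve into a fiber of $f$, treat each fiber with Lemma~\ref{minus2curves}, and then transport the very general position of the $p_i$ into $\Pic(B)$ to see that only countably many curves survive.

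First I would reduce to curves contained in the fibers $S_t:=\overline{f^{-1}(t)}$ (the strict transforms of the quadrics $Q_t$ in the pencil). The line bundle $-\tfrac12 K_X=2H-\sum E_i$ has $h^0=2$ and base locus $B'$. Let $C$ be an irreducible curve with $L\cdot C=0$ and $C\ne B'$. Then $C\not\subset B'$, so some section of $-\tfrac12 K_X$ does not vanish on $C$; hence $(-\tfrac12 K_X)|_C$ has a nonzero section, and since it has degree $0$ and $C$ is integral it is trivial. Thus $H^0(X,-\tfrac12 K_X)\to H^0(C,\O_C)$ is a nonzero map from a $2$-dimensional space to a $1$-dimensional one, so it has nonzero kernel; a section $s\ne 0$ in the kernel vanishes on $C$, so $C\subset Z(s)=S_t$ for some $t$.

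Now fix a fiber $S_t$ with $Q_t$ smooth. By adjunction (using $S_t\in|-\tfrac12 K_X|$), $-K_{S_t}=-\tfrac12 K_X|_{S_t}$ is nef and $K_{S_t}^2=-\tfrac18 K_X^3=0$; and since $B\subset Q_t$, in $\Pic(S_t)$ one has $B'\sim\pi^\ast\O_{\PP^3}(2)|_{Q_t}-\sum E_i=-K_{S_t}$. Restricting to $B'\cong B$ gives
$$N_{B'/S_t}\;\cong\;\O_{\PP^3}(2)|_B\otimes\O_B(-p_1-\cdots-p_8)\;=:\;\eta\in\Pic^0(B),$$
which is \emph{independent of $t$}. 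Because the $p_i$ are very general, $\eta$ is non-torsion, and moreover $\O_B(\gamma_1 p_1+\cdots+\gamma_8 p_8)\ne\O_{\PP^3}(k)|_B$ for all $k\ge 1$ and all $\gamma_i\ge 0$ with $\sum\gamma_i=4k$ — each such identity is a single proper closed condition on $(p_1,\dots,p_8)\in(\PP^3)^8$ (fix $B$, vary the points on it), and there are only countably many of them. By Lemma~\ref{minus2curves}, an irreducible $C\subset S_t$ with $L\cdot C=0$ either lies in $|-mK_{S_t}|$ for some $m\ge1$ or is a $(-2)$-curve. If $C\in|-mK_{S_t}|$ and $C\ne B'$, then $C\cdot B'=m(B')^2=0$ forces $C\cap B'=\emptyset$, so $\O_{S_t}(C)|_{B'}=\O_{B'}$; but this class equals $\eta^{\otimes m}$, contradicting non-torsion, so $B'$ is the only curve of anticanonical type. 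If $C$ is a $(-2)$-curve, then $C\ne B'$ and $C\cdot B'=C\cdot(-K_{S_t})=0$ again give $C\cap B'=\emptyset$, so $[C]$ restricts to $\O_B$; writing $[C]=\alpha a+\beta b-\sum\gamma_i E_i$ with $a,b$ the two rulings of $Q_t$ and $\alpha,\beta,\gamma_i\ge 0$ (the case $[C]=E_i$ is excluded since $K_{S_t}\cdot E_i=-1$), and setting $R_t:=\O_{Q_t}(1,0)|_B\in\Pic^2(B)$, the restriction relation reads
$$\beta\,\O_{\PP^3}(1)|_B+(\alpha-\beta)R_t=\O_B(\gamma_1 p_1+\cdots+\gamma_8 p_8)\qquad\text{in }\Pic(B).$$
If $\alpha=\beta$ this is one of the excluded identities. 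If $\alpha\ne\beta$, then $R_t$ lies in a fixed finite subset of $\Pic^2(B)$ (a fiber of multiplication by $\alpha-\beta$). The key point — and the main obstacle — is that $t\mapsto R_t$ is non-constant on the smooth locus of the pencil: a smooth quadric in the pencil is reconstructed from the unordered pair $\{R_t,\ \O_{\PP^3}(1)|_B-R_t\}$ of degree-$2$ line bundles its rulings cut out on $B$, namely as the Segre image of $\phi_{|R_t|}\times\phi_{|R_t'|}\colon B\to\PP^1\times\PP^1$. Hence each value of $R_t$ is attained for only finitely many $t$, so only finitely many fibers contain a $(-2)$-curve in any given class, and a $(-2)$-class carries at most one curve. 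Since $\Pic(S_t)$ ranges in a fixed lattice of finite rank, there are only countably many possible classes, hence only countably many $(-2)$-curves among the smooth fibers.

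Finally, the pencil has only finitely many singular members; for these I would pass to the minimal resolution $\widetilde S_t$ (same $K^2=0$, still $-K$ nef, and $B'$ disjoint from the new $(-2)$-curve since the cone vertices avoid the $p_i$), where the same arguments apply, and in the worst case each such fiber contributes at most one curve per class in its Picard lattice, hence countably many. Together with $B'$ itself, this shows the set of curves $C$ with $L\cdot C=0$ is at most countably infinite. The difficulty is concentrated in the penultimate paragraph: one must forbid bad identities in $\Pic(B)$ not merely for a very general quadric in the pencil (Lemma~\ref{Pic-injective}) but uniformly over \emph{all} fibers, since an offending curve could sit in a special one — and this works precisely because the classes $\eta$ and $\O_{\PP^3}(k)|_B$ are the same for every fiber, while the only genuinely $t$-dependent class $R_t$ moves, as the quadric is recovered from it.
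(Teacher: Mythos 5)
Your proof is correct and follows the paper's overall strategy: reduce to curves lying in fibers of $f$ (or equal to $B'$), apply Lemma~\ref{minus2curves} on each fiber, eliminate the anticanonical multiples via the non-torsion class $\eta=N_{B'/S_t}$, and exploit the very general position of the points in $\Pic(B)$. Where you genuinely diverge is in the final count. The paper argues that each fiber carries only finitely many $(-2)$-curves (asserted from nefness of $-K_S$) and that only countably many fibers carry any, citing Lemma~\ref{Pic-injective}; you instead count by numerical class: for each of the countably many classes $\alpha a+\beta b-\sum\gamma_i e_i$, the case $\alpha=\beta$ is excluded outright by very-generality, while $\alpha\neq\beta$ pins the ruling class $R_t$ down to a finite subset of $\Pic^2(B)$, and the reconstruction of $Q_t$ as the Segre image of $\phi_{\lvert R_t\rvert}\times\phi_{\lvert R_t'\rvert}$ shows each value of $R_t$ occurs for at most one $t$. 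This buys two things: you never need the per-fiber finiteness of $(-2)$-curves, and you make explicit the ``relation between the points and line bundles coming from $Q$'' that the paper's last paragraph only gestures at when invoking Lemma~\ref{Pic-injective}, so your version explains why countability holds uniformly over all fibers and not just the very general one. Two cosmetic quibbles: the genericity conditions should be imposed on the incidence variety of pairs (elliptic quartic $B$, eight points on $B$) rather than on $(\PP^3)^8$ with $B$ treated as fixed (the paper's discussion before Lemma~\ref{Pic-injective} has the same looseness), and the reason $B'$ misses the cone vertices is that $B$ is smooth (a vertex lying on $B=Q_0\cap Q_t$ would force a node there), not that the vertices avoid the $p_i$; neither affects the argument.
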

\begin{proof}
  Suppose that \(C\) is a curve with \(L \cdot C = 0\).  If \(C\) is
  not contained in the base locus of \(\lvert-\frac12K_X\rvert\), then
  \(C\) meets some fiber \(S \in \lvert-\frac{1}{2}K_X\rvert\) of the
  map \(f:X \dashrightarrow \PP^1\), at a point not contained in
  \(B^\prime\).  If \(C\) is not contained in the fiber \(S\), then
  \(S \cdot C\) is positive and so too is \(L \cdot C\).
  Consequently, if \(C\) is a curve with \(L \cdot C = 0\), then
  either \(C\) is the unique curve \(B^\prime\) in the base locus, or
  \(C\) lies in some fiber \(S\) of \(f\). In the following, we will
  assume that $C\neq B'$.

Assume first that $S$ is smooth. $S$ is the strict transform of a
smooth quadric, i.e., the blow-up of $\PP^1\times \PP^1$ along  8
points. By Lemma \ref{minus2curves}, $C$ is either linearly equivalent to a multiple of
$-K_{S}=B'$, or is a $(-2)$-curve on $S$. However, the normal bundle
of $B'$ in $S$ is $$\O_{S}(B')|_{B'}=\O(-K_{S})|_{B'}=\O(S)|_{B'}=
\O_{\PP^3}(2)(-p_1-\ldots-p_8)|_{B},$$ which is non-torsion for
very general $p_1,\ldots,p_8$, and so no multiple of $B'$ moves in $S$. It follows that the only curve on $S$ with class proportional to $-K_S$ is $B'$ itself. We conclude that $C \subset S$ is a $(-2)$-curve.   Since \(-K_S\) is nef, the number of \((-2)\)-curves on \(S\) is finite.

The family \(f\) also has four singular fibers \(S_s\), each
isomorphic to an 8-point blow-up of a quadric cone. By genericity of
the points, we may assume that the curve $B^\prime$ does not pass
through the singular point of any of these fibers. Let \(\sigma :
\tilde{S}_s \to S_s\) be the blow-up at the singular point, so that
\(\tilde{S}_s\) is isomorphic to the blow-up of \(\PP(\O\+\O(2))\) at
\(8\) points.  Then \(\sigma^\ast(B^\prime)\) is anticanonical and
\(\tilde{S}_s\) is a smooth rational surface with \(-K_{\tilde{S}_s}\)
nef and \(K_{\tilde{S}_s}^2 = 0\). It follows that
\(-K_{\tilde{S}_s}\) has no movable multiple, by the same argument
used to prove this in the smooth fibers. So the strict transform of
\(C\) on \(\tilde{S}_s\) must be a \((-2)\)-curve, and as before there
are only finitely many such curves since \(-K_{\tilde{S}_s}\) is nef.

Since the set of $(-2)$-curves in any fiber is finite, we are reduced
to showing that there are only countably many smooth fibers of $f$
containing $(-2)$-curves.

Note that $C$ corresponds to a divisor through the points $p_1,\ldots,p_8$ on some quadric surface $Q$. Restricting the section defining $C$ in $Q$ to $B$ gives a relation in $\Pic^0(B)\simeq B$ between the points $p_1,\ldots,p_8$ and line bundles coming from $Q$. However, by Lemma \ref{Pic-injective} there are only countably many fibers where this happens.
\end{proof}

\def\p{\mathbf p}
\section{Cremona actions}

Additional curves with \(L \cdot C = 0\) will be constructed using
repeated applications of the standard Cremona transformation on
\(\PP^3\), yielding `elementary $(-1)$-curves', considered by Laface
and Ugaglia~\cite{lu2}.
The standard Cremona transformation $\Cr:\PP^3\dashrightarrow \PP^3$ is given by 
$$
\Cr(x_0,x_1,x_2,x_3)=(x_0^{-1},x_1^{-1},x_2^{-1},x_3^{-1})
$$

Let \(\pi : X \to \PP^3\) be the blow-up of \(\PP^3\) at the four
standard coordinate points.  The rational map $\Cr \circ \pi : X \rat
\PP^3$ can be factored as follows.
\[
\xymatrix{
 & Y \ar[dl]_p \ar[dr]^{p^\prime} & \\
X \ar@{-->}[rr]^{\overline{\Cr}} \ar[d]_\pi 
&&  X^\prime \ar[d]^{\pi^\prime} \\
\PP^3 \ar@{-->}[rr]^{\Cr} && \PP^3
}
\]
Here $p$ is the blow-up of $X$ along the transforms of the six lines through pairs of the four coordinate points.  The exceptional divisors of $p$ are isomorphic to $\PP^1\times \PP^1$ and $p'$ is the contraction of the `other ruling' of each $\PP^1\times \PP^1$. The induced map $\overline{\Cr}$ is a flop of these curves. $\pi'$ then blows down the strict transforms of the four planes through three of the four points, realizing $X^\prime$ as the blow-up of $\PP^3$ at four points as well.

The Cremona transformation has the following properties: (i)
$\overline{\Cr}$ is an isomorphism in codimension 1, (ii) It
preserves the canonical class (i.e.,
$\overline{\Cr}^*(K_{X^\prime})=K_{X}$) and (iii) it induces
isomorphisms $M : N^1(X) \to N^1(X^\prime)$ and $\check{M} : N_1(X)
\to N_1(X^\prime)$, given in the standard bases by the matrices
$\left(\begin{smallmatrix}M &0 \\ 0 & I_4 \end{smallmatrix}\right)$
and $\left(\begin{smallmatrix}\tilde M &0 \\ 0 &
    I_4 \end{smallmatrix}\right)$ where
\[
M = \left( \begin{array}{rrrrr}
3 & 1 & 1 & 1 & 1 \\
-2 & 0 & -1 & -1 & -1 \\
-2 & -1 & 0 & -1 & -1 \\
-2 & -1 & -1 & 0 & -1 \\
-2 & -1 & -1 & -1 & 0 \\
\end{array} \right), \quad
\tilde{M} = \left( \begin{array}{rrrrr}
3 & 2 & 2 & 2 & 2 \\
-1 & 0 & -1 & -1 & -1 \\
-1 & -1 & 0 & -1 & -1 \\
-1 & -1 & -1 & 0 & -1 \\
-1 & -1 & -1 & -1 & 0 \\
\end{array} \right).
\]

If $\p=(p_1,\ldots,p_8)$ is an 8-tuple of distinct points in $\PP^3$ with the first four not coplanar, we denote by $\Cr_\p : \PP^3 \rat \PP^3$ the transformation $A^{-1}\circ \Cr \circ A$ where $A$ is the linear transformation taking $p_1,\ldots,p_4$ to the standard coordinate points. (If the points are in general position, $A$ is uniquely determined if we additionally impose that it also fixes the point $(1,1,1,1)$).  Write \(\mathbf q\) for the new \(8\)-tuple \((p_1,\ldots,p_4,\Cr_{\mathbf p}(p_5),\ldots,\Cr_{\mathbf p}(p_8))\).  

Let \(X_{\mathbf p}\) denote the blow-up of \(\PP^3\) at the eight
points of \(\mathbf p\), and \(X_{\mathbf q}\) denote the blow-up of
  \(\PP^3\) at the eight points of \(\mathbf q\).  The discussion
  above shows that the map \(\Cr_{\mathbf p} : \PP^3 \rat \PP^3\)
  induces a birational map \(\overline{\Cr}_{\mathbf p} : X_{\mathbf
    p} \rat X_{\mathbf q}\), which flops the six lines between two of
  the four points \(p_1,\ldots,p_4\).

  The crucial observation is that a very general configuration of 8
  points in $\PP^3$ has infinite orbit under the group generated by
  Cremona transformations. This fact was essentially known to Coble
  \cite{coble}; see \cite{dolgachev} for a more modern account.

\section{Proof of Theorem 1}
We are now in position to complete the proof of Theorem 1. Again, we let $X$ denote the blow-up of $\PP^3$ in a very general configuration $\mathbf p=(p_1,\ldots, p_8)$ of eight points. We have
already seen that the set of curves $C$ such that $L\cdot C=0$
correspond to $(-2)$-curves on the fibers of $f:X\dashrightarrow
\PP^1$ and that this set is at most countably infinite. It remains
only to show that this set is in fact infinite.

\begin{lemma}
\label{actuallycountable}
There is an infinite set of curves \(C \subset X\) with \(L \cdot C = 0\).
\end{lemma}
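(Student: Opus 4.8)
The strategy is to manufacture the curves as strict transforms of lines through pairs of points under long chains of Cremona transformations, and to extract infiniteness from the fact that the Cremona orbit of $\mathbf{p}$ is infinite.

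First I would set up an infinite chain of configurations. Since a very general eight-tuple has infinite Cremona orbit, while from any configuration only finitely many others are reachable by a single transformation $\Cr_{\mathbf{q}}$ (there are $\binom{8}{4}$ quadruples), the orbit --- viewed as a graph with an edge for each pair related by a single Cremona transformation --- is an infinite, connected, locally finite graph and hence contains a ray. That is, there is a sequence of pairwise distinct configurations $\mathbf{p}=\mathbf{p}_0,\mathbf{p}_1,\mathbf{p}_2,\dots$ with $\mathbf{p}_{k+1}$ obtained from $\mathbf{p}_k$ by the Cremona transformation $\Cr_{\mathbf{p}_k}$ based at a quadruple $I_k\subset\{1,\dots,8\}$. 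As explained in the previous section, this yields birational maps $\overline{\Cr}_{\mathbf{p}_k}:X_{\mathbf{p}_k}\rat X_{\mathbf{p}_{k+1}}$, each an isomorphism in codimension one and each preserving the canonical class, and hence composites $\Psi_k:X=X_{\mathbf{p}_0}\rat X_{\mathbf{p}_k}$ with the same two properties, inducing lattice isomorphisms $M_k$ on $N^1$ and $\check M_k$ on $N_1$ with $M_k(-K_X)=-K_{X_{\mathbf{p}_k}}$.

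Next I would transport curves back to $X$. On $X_{\mathbf{p}_k}$ take the strict transform $\ell_k$ of the line through two of the eight points whose indices avoid $I_{k-1}$; this is a smooth rational $(-2)$-curve with $-K_{X_{\mathbf{p}_k}}\cdot\ell_k=0$, and --- being disjoint from the six flopped lines of $\overline{\Cr}_{\mathbf{p}_{k-1}}$, and, as one checks as in Laface--Ugaglia, lying off the codimension-two indeterminacy locus of $\Psi_k^{-1}$ --- its strict transform $C_k:=\Psi_k^{-1}(\ell_k)\subset X$ is a well-defined irreducible rational curve, in fact an $\O(-1)\oplus\O(-1)$-curve. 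Since $\Psi_k$ is an isomorphism in codimension one, intersection numbers are computed through $M_k$ and $\check M_k$, so $L\cdot C_k=(-K_X)\cdot\check M_k^{-1}[\ell_k]=\bigl(M_k^{-1}(-K_{X_{\mathbf{p}_k}})\bigr)\cdot\check M_k^{-1}[\ell_k]=(-K_{X_{\mathbf{p}_k}})\cdot\ell_k=0$. Thus each $C_k$ is a curve of the required kind; by Lemma~\ref{atmostcountable} and Lemma~\ref{minus2curves} it is a $(-2)$-curve on some fiber of $f$.

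The point that needs care --- and the real content of the lemma --- is that infinitely many of the $C_k$ are genuinely distinct; \emph{a priori} the construction could return a fixed finite set of curves. For this I would argue with classes. Writing each $\check M_{I_j}$ as the involution it is, $[C_k]=\check M_{I_0}\check M_{I_1}\cdots\check M_{I_{k-1}}([\ell_k])$, so the classes $[C_k]$ lie in the orbit of the root classes $h-e_i-e_j$ under the group $\Gamma\subset\mathrm{GL}(N_1(X))$ generated by the Cremona isometries $\check M_I$. The group $\Gamma$ is infinite --- this is exactly the arithmetic reflection of the infinitude of the Cremona orbit of $\mathbf{p}$ (the Coble--Dolgachev fact) --- and it acts on the rank-eight lattice $(-K_X)^\perp\subset N_1(X)$ spanned by the classes $h-e_i-e_j$; an infinite group cannot act on a lattice with finite orbits on a generating set, so some, hence by permutation symmetry every, class $h-e_i-e_j$ has infinite $\Gamma$-orbit. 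Therefore the $[C_k]$ take infinitely many values, and infinitely many of the $C_k$ are distinct, proving the lemma. The main obstacle is precisely this last passage, from ``the Cremona orbit of the configuration is infinite'' to ``infinitely many distinct numerical classes occur''; the geometric construction of the curves and the verification $L\cdot C_k=0$ are then routine bookkeeping with the matrices $M$, $\tilde M$ of the previous section.
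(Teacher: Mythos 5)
Your construction follows the paper's strategy (chains of Cremona transformations, strict transforms of lines, transport of classes by the matrices $\tilde M$), but the step you yourself identify as the real content --- passing to infinitely many distinct classes --- is where the argument breaks. Knowing that every class $h-e_i-e_j$ has an infinite orbit under the full group $\Gamma$ does not imply that your particular sequence $[C_k]=g_k([\ell_k])$, with $g_k=\check M_{I_0}\cdots\check M_{I_{k-1}}$ running along one chosen ray and with the root class $[\ell_k]$ varying with $k$, takes infinitely many values. The relevant Weyl group here is of affine type ($T_{4,4,2}$), so the stabilizer of a class in $(-K_X)^\perp$ can be infinite: pairwise distinct elements $g_k$ may well return the same class over and over, and infinitude of the whole orbit says nothing about this one sequence. (Distinct configurations $\mathbf p_k$ also do not force distinct curves or classes on $X$.) There is a secondary gap in the same passage: the principle ``an infinite group acting on a lattice cannot have finite orbits on a spanning set'' only bounds the \emph{image} of $\Gamma$ in $\mathrm{GL}$ of that lattice; since the classes $h-e_i-e_j$ span only the rank-eight sublattice $(-K_X)^\perp\subset N_1(X)$, you would need the image of $\Gamma$ in $\mathrm{GL}((-K_X)^\perp)$ to be infinite, and ``$\Gamma$ is infinite in $\mathrm{GL}(N_1(X))$'' does not give this for free: a priori the kernel of restriction could consist of infinite-order unipotent elements moving $N_1(X)$ only in the $(-K_X)$-orthogonal direction. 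Both points can be repaired, but not without a computation of the sort you are trying to avoid.

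The paper closes exactly this hole by fixing one specific composite: Cremona at the first four points followed by the cyclic shift of the eight points (a Coxeter element), so that all classes are powers $\tilde M_\sigma^n(h-e_7-e_8)$ of a single matrix, and a direct check shows $\tilde M_\sigma$ has a $3\times 3$ Jordan block at eigenvalue $1$, whence the degrees of these classes grow without bound and the curves $C_n$ are distinct. The fixed sequence also matters for your earlier step: the verification that the strict transforms of $\ell_n$ avoid the indeterminacy loci at \emph{every} intermediate stage is the Laface--Ugaglia check for these particular elementary curves; for an arbitrary ray in the orbit graph and an arbitrary choice of line at each stage, that citation does not directly apply and you have not supplied a substitute. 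So the geometric bookkeeping is essentially right, but as written the proof does not establish that infinitely many distinct curves are produced.
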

\begin{proof}
  Starting from the very general configuration \(\mathbf p_0 = \mathbf
  p\), construct a sequence of configurations \(\mathbf p_0,\mathbf
  p_1,\ldots,\mathbf p_n,\ldots\) so that \(\mathbf p_{i-1}\) is
  obtained from \(\mathbf p_i\) by making a Cremona transformation
  centered at the first four points of \(\mathbf p_i\) and then
  permuting the \(8\)-tuple to move the first entry to the end of the
  list. The ``very general'' assumption on $\mathbf p_0$ guarantees
  that no four points ever become coplanar, and so the requisite
  Cremona transformations are well-defined.

This gives rise to a sequence of rational maps
\[
\xymatrix{
\cdots \ar@{-->}[r]^-{\Cr_{\mathbf p_{n+1}}} &
X_{\mathbf p_n} \ar@{-->}[r]^-{\Cr_{\mathbf p_{n}}} &
X_{\mathbf p_{n-1}} \ar@{-->}[r]^-{\Cr_{\mathbf p_{n-1}}} &
\cdots \ar@{-->}[r]^-{\Cr_{\mathbf p_{1}}} &
X_{\mathbf p_0} = X
}
\]

If \(C\) is a curve on \(X_{\mathbf p_n}\) such that the strict
transform of \(C\) on \(X_{\mathbf p_i}\) is disjoint from the
indeterminacy locus of \(X_{\mathbf p_i} \rat X_{\mathbf p_{i-1}}\)
for all \(1 \leq i \leq n\), then the strict transform of \(C\) on
\(X_{\mathbf p}\) has numerical class \(\tilde{M}_\sigma^n([C])\),
where

\setlength{\arraycolsep}{2pt}
\[
\tilde{M}_\sigma =  \left( \begin{array}{c|c}
\tilde{M} & 0 \\\hline
0 & I_4
\end{array}
\right)
\left( \begin{array}{c|c}
1 & 0 \\\hline
0 & \Pi_\sigma
\end{array}
\right) = 
 \begin{tiny}\left(
\begin{array}{rrrrrrrrr}
3 & 2 & 2 & 2 & 2 & 0 & 0 & 0 & 0 \\ 
-1 & -1 & 0 & -1 & -1 & 0 & 0 & 0 & 0 \\ 
-1 & -1 & -1 & 0 & -1 & 0 & 0 & 0 & 0 \\ 
-1 & -1 & -1 & -1 & 0 & 0 & 0 & 0 & 0 \\ 
0 & 0 & 0 & 0 & 0 & 1 & 0 & 0 & 0 \\ 
0 & 0 & 0 & 0 & 0 & 0 & 1 & 0 & 0 \\ 
0 & 0 & 0 & 0 & 0 & 0 & 0 & 1 & \phantom{-}0 \\ 
0 & \phantom{-}0 & \phantom{-}0 & \phantom{-}0 & \phantom{-}0 & \phantom{-}0 & \phantom{-}0 & \phantom{-}0 & 1 \\ 
-1 & 0 & -1 & -1 & -1 & 0 & 0 & 0 & 0
\end{array}\right)
\end{tiny}
\]
with \(\Pi_\sigma\) the matrix encoding the permutation of the points.

If we take \(\ell_n\) to be a line through \(p_7\) and \(p_8\) on
\(X_{\mathbf p_n}\), its strict transform on \(X_{\mathbf p}\) is a
curve \(C_n\) of class \(\tilde{M}_\sigma^n(h-e_7-e_8)\); that the
strict transforms of \(\ell_n\) are disjoint from the indeterminacy
loci is checked in \cite{lu2}. It is easy to verify
that the matrix \(\tilde{M}_\sigma\) has a \(3 \times 3\) Jordan block
associated to the eigenvalue \(1\), and a direct calculation then
shows that the degrees of the classes
\([C_n]=\tilde{M}_\sigma^n(h-e_7-e_8)\) grow without bound as \(n\) is
increased, so the curves \(C_n\) are distinct.

However, for every value of \(n\) we have \(-K_{X_{\mathbf p}} \cdot
C_n = 0\): the curve \(\ell_n \subset X_{\mathbf p_n}\) is a rational
curve with normal bundle \(\cO(-1) \oplus \cO(-1)\),
and the same is true of its strict transform \(C_n \subset X_{\mathbf
  p}\) because \(\ell_n\) does not meet the indeterminacy locus of the
map \(X_{\mathbf p_n} \rat X_{\mathbf p}\).
It follows that these give an infinite set of curves with \(L \cdot C
= 0\).  
\end{proof}

\begin{remark}
It is well-known that the classes in \(K_X^\perp\) on a point-set blow-up of projective space form a root system (in our case, it is the T-shaped Dynkin diagram $T_{4,4,2}$), and the Cremona transformations induce elements in the corresponding Weyl group. Moreover, the curves on which $K_X$ is zero are exactly the orbit of the class of a line in \(N_1(X)\) under this Weyl group. (See \cite{dolgachev} or \cite{Mukai} for more precise statements).  The composition of the Cremona transformation and a permutation of the points used above corresponds to the action of a Coxeter element in this group.
\end{remark}

A more detailed account can be found in~\cite{lesieutre1}: the
curves here are (up to permutation of the indices) the curves
``\(C_n\)'' constructed in Lemma 5.2. Although~\cite{lesieutre1} deals
with the blow-up of \(\PP^3\) at \(9\) points, the same argument works
with only \(8\); the only difference is that the matrix
\(\tilde{M}_\sigma\) considered here has a \(3 \times 3\) Jordan block
associated to the eigenvalue \(1\), rather than an eigenvalue greater
than \(1\).

Together, Lemmas \ref{atmostcountable} and \ref{actuallycountable}
complete the proof of Theorem 1.  The corollaries stated in the
introduction follow immediately.

\begin{proof}[Proof of Corollary~\ref{weirdquasiproj}]
  Fix a very general smooth representative \(S\) of
  \(\lvert-\frac12K_X\rvert\), and let \(U = X \setminus S \subset
  X\). It is clear that every complete curve $C$ in $U$ must satisfy
  $S \cdot C = -\frac{1}{2}K_X\cdot C=0$, and we have already shown
  that the set of curves with this property is countably
  infinite. Moreover, none of these curves \(C\) except \(B^\prime\)
  is contained in \(S\), but they all satisfy \(S \cdot C = 0\).
  Consequently all such \(C\) are contained in \(U\).
\end{proof}

\begin{proof}[Proof of Corollary~\ref{nullexample}]
  Let \(H\) be a very ample divisor on \(X\) and consider the variety
  \(Y = \PP(\cO_X \oplus \cO_X(H))\).  The fourfold \(Y\) admits two
  obvious maps: first, there is a \(\PP^1\)-bundle \(p : Y \to X\);
  second, there is a contraction \(q : Y \to CX\) of the section \(E
  \subset Y\) determined by the quotient \(\cO_X \oplus \cO_X(H) \to
  \cO_X\), yielding the projective cone \(CX\) over \(X\).

  Fix an ample divisor \(G\) on \(CX\), and take \(M = p^\ast(L) +
  q^\ast(G)\).  The pullback \(p^\ast(L)\) is certainly nef, and since
  \(q\) is birational and \(G\) is ample, \(q^\ast(G)\) is big and
  nef. The line bundle \(M\), being the sum of a nef line bundle and a
  big and nef one, is itself big and nef.

  Suppose now that \(C\) is a curve with \(M \cdot C = 0\).  It
  must be that \(q^\ast(G) \cdot C = 0\), so \(C\) is contracted by
  \(q\), and lies in the exceptional section \(E \subset Y\).  Under
  the identification \(E \cong X\), the restriction \(M\vert_E = p^\ast(L)
  \vert_E\) is simply the line bundle \(L\), and so the set of curves
  \(C \subset E\) with \(M \cdot C = 0\) is countable.
\end{proof}

The same construction using $\PP(\cO_X \oplus \cO_X(H)^{\oplus r})$ with
  $r\ge 1$ gives examples as in Corollary 3 for any dimension greater than three.
  
\begin{remark}\label{3fold}
  If \(L\) is a big and nef line bundle on a threefold \(X\), then the
  set of curves with \(L \cdot C = 0\) is either finite or
  uncountable.  Indeed, \(L\) is \(\QQ\)-linearly equivalent to a sum
  \(A+E\), with \(A\) ample and \(E\) effective.  Any curve with
  \((A+E) \cdot C = 0\) must be contained in the support of \(E\).
  For any component \(E_i \subset E\), the divisor \(L \vert_{E_i}\)
  is nef and hence zero on either finitely many or uncountably many
  curves; this follows from the two-dimensional statement, applied on
  a resolution of \(E_i\).

\end{remark}

\section{Remarks}

\subsection{Blow-ups of $\PP^1\times \PP^1\times \PP^1$}
We obtain a similar example by considering a 6-point blow-up of $\PP^1\times \PP^1\times \PP^1$. Here the canonical divisor on the blow-up $X$ is given by $2D$ where $D=\pi^*\O(1,1,1)-E_1-\ldots-E_6$. Again there is a 1-dimensional family of $(1,1,1)$-divisors passing though the 6 points. Each $(1,1,1)$-divisor corresponds to a Del Pezzo surface of degree 6 (in fact each projection to $\PP^1\times \PP^1$ is the blow-up of $\PP^1\times \PP^1$ in 2 points). It follows that $X$ is fibered into blow-ups of $\PP^1\times \PP^1$ in 8 points, as before.

Again there are many curves on $X$ so that $-K_X\cdot C=0$. For example, when an exceptional divisor of a Del Pezzo surface passes through one of the points, the strict transform is a $(-2)$-curve on $X$ which satisfies $K_X\cdot C=0$. Here an infinite sequence of such curves can be obtained by applying the Cremona transformations of the form
$$
\phi:(x_0,x_1) \times (y_0,y_1)\times (z_0,z_1) \mapsto (x_1,x_0)\times (y_0/{x_0},y_1/{x_1})\times (z_0/{x_0},z_1/{x_1}) 
$$This transformation, and its permutations, generates an infinite representation in $GL(N^1(X)_\RR)$, as shown by Mukai in \cite{Mukai}, and so arguing as before we obtain infinitely many curves on $X$ such that $-K_X\cdot C=0$. We note that this threefold is not isomorphic to the previous example.

\subsection{A question}

 The example here shows that it is possible for a linear subspace of
\(N_1(X)\) to contain precisely a countable number of irreducible
curves: \(-K_X^\perp \subset N_1(X)\) is such a subspace.
Since \(-K_X\) is nef, \(-K_X^\perp \cap \NEb(X)\) is in fact an
extremal face of the cone of curves \(\NEb(X)\) containing a countable number of
irreducible curves. Related is the following:
 
\begin{question*}Let $X$ be a smooth projective variety and let
  $\alpha\in N_1(X)$ be a numerical cycle class. Can it happen that
  the set of irreducible curves on $X$ with class proportional to
  $\alpha$ is countably infinite?
\end{question*}

Again, this can not happen on a surface. Indeed, a divisor $D$ on a surface either has a movable multiple (in which the number is uncountable) or $h^0(mD)=1$ for all $m\ge 1$. In the latter case, arguing as in \cite{ottem} or \cite{totaro} shows that the number of irreducible divisors is less than the Picard number of the surface.

\end{document}